\DeclareMathOperator{\Res}{Res}
\DeclareMathOperator{\Syl}{Syl}
\theoremstyle{plain}
\newtheorem{thm}{Theorem}[section]
\newtheorem{cor}[thm]{Corollary}
\newtheorem{lem}[thm]{Lemma}
\theoremstyle{definition}
\author{Joanna Turaj}
\title{On some resultants formulas of Schur type}
\date{}
\begin{document}
\maketitle
\begin{abstract}
Let \((r_{A,n}(x))_{n \in \mathbb{N}}\) be a sequence of polynomials with coefficients from a field \(K\) satisfying the recurrence relation 
\[r_{A,n}(x)= \sum_{|\alpha|\leq m} t_{\alpha,n}(x)\textbf{r}_{A,n}^\alpha(x)\]
of order \(d+1 \in \mathbb{N}_{+}\), where \(t_{\alpha,n} \in K[x]\), \(m \in \mathbb{N}_{+}\) are fixed, \(\alpha \in \mathbb{N}^{d+1}\), \(|\alpha| = \alpha_0 + \ldots+\alpha_d\) and
\[\textbf{r}_{A,n}^\alpha(x)=r_{A,n-1}^{\alpha_0}(x)r_{A,n-2}^{\alpha_1}(x)\cdots r_{A,n-d-1}^{\alpha_d}(x).\]
We show that under mild assumptions on the initial polynomials \(r_{A,0}, \ldots, r_{A,d}\) and the coefficients \(t_{\alpha,n}\), we can give the expression for the resultant \( \Res(r_{A,n}, r_{A,n-1})\). Our results generalize recent result of Ulas concerning the case \(m=1\) and \(d=1\).

\end{abstract}

\section{Introduction}
Let \(K\) be a field and let \(\mathbb{N}\) denote the set of non-negative integers, \(\mathbb{N}_{+}\) the set of positive integers, and for given \(k \in \mathbb{N}\) define \( \mathbb{N}_{\geq k }\) as the set of integers greater or equal to \(k\). 

The resultant \(\Res(F,G)\) of two polynomials \(F, G \in K[x]\) first appeared in the work of J. J. Sylvester. As an example, he considered equations of the form 
\begin{align}\label{equations}
\begin{split}
ax^2+bx+c&=0,\\
lx^2+mx+n&=0,
\end{split}
\end{align}
and the corresponding matrix
\[A=\begin{bmatrix}
a & b& c&0 \\
0 &a & b& c\\
l & m&n &0\\
0 &l &m&n
\end{bmatrix}.\]
The determinant \(\det(A)\) (which is equal to the resultant of the polynomials on the left-hand side of the equations (\ref{equations})) vanishes if and only if the chosen polynomials have a common factor \cite{sylvester1840xxiii}.  More generally, for arbitrary polynomials \(F,G \in K[x]\) their resultant \(\Res(F,G)=0\) if and only if they have a common root in a fixed algebraic closure of \(K\) (definition will be presented in Section \ref{sect2}). Resultants can be used for solving systems of algebraic equations by reducing the problem to the search for roots of polynomials in one variable \cite[Section 1.3]{prasolov2004polynomials}.

We can ask for an expression for the resultants of polynomials given by a recurrence. The classical result in this field is Schur's formula \cite[Chapter VI]{szeg1939orthogonal}. Strictly speaking, let \((r_n(x))_{n\in\mathbb{N}}\) be a sequence of polynomials defined by the recurrence relation
\[r_n(x)=(a_nx+b_n)r_{n-1}(x)-c_nr_{n-2}(x), \; n \geq 2,\]
where \(r_0(x)=1\), \(r_1(x)=a_1x+b_1\), and \(a_n,b_n,c_n \in \mathbb{C}\) such that \(a_nc_n\neq 0\). The resultant of the polynomials \(r_{n}, r_{n-1}\) is given by the formula
\[\Res(r_n, r_{n-1})=(-1)^{n(n-1)/2}\prod_{i=1}^{n-1}a_i^{2(n-i)} c_{i+1}^i.\]
The original result of Schur can be found in \cite{schur1931affektlose} (in German).

Recently this result was generalized by Ulas \cite{ulas2021generalization}. He considered the case when the initial polynomials \(r_0, r_1\) of the recurrence are not necessarily of degrees \(0\) and \(1\), respectively, and the coefficients of this recurrence can be more complicated polynomials. More precisely, for an element \(B\) of the set \( \mathcal{B}= \{(i,j,k,m)\in \mathbb{N}^{4}: i \leq j, m \leq k\}\) he defined a sequence of polynomials \((r_{B,n}(x))_{n\in \mathbb{N}}\) by the recurrence
\[r_{B,n}(x)= f_n(x)r_{B,n-1}(x) -v_nx^mr_{B,n-2}(x), \; n \geq 2,\]
where \(v_n \in K\) and \(r_{B,0}, r_{B,1}, f_n \in K[x]\) are of degrees \(i,j,k\), respectively. Under those assumptions, he gave a formula for \(\Res(r_{B,n}, r_{B,n-1})\).

It might be asked whether the results of Ulas can be further generalized, especially for a polynomial sequence given by a more general (especially non-linear) recurrences. In this paper we extend the theorem presented in \cite{ulas2021generalization} to the situation where the sequence \((r_{A,n}(x))_{n \in \mathbb{N}}\) is described by quite general, non-linear recurrence relation of order \(d+1 \in \mathbb{N}_+\). More precisely, we assume that the polynomials in the sequence \((r_{A,n}(x))_{n\in\mathbb{N}}\) satisfy the following recurrence relation 
\begin{equation}\label{recurrence}
r_{A,n}(x)=g_{n}(x)r_{A,n-1}^m(x) +\sum_{|\alpha|<m} t_{\alpha,n}(x)\textbf{r}_{A,n}^\alpha(x)r_{A,n-1}(x)+v_{n}x^lr_{A,n-2}^m(x),
\end{equation}
where \(n \geq d+1\), \(v_n \in K\), \(g_{n}, t_{\alpha,n} \in K[x]\), \(\alpha \in \mathbb{N}^{d+1}\), \(|\alpha|=\alpha_0 +\ldots +\alpha_d\), and 
\[\textbf{r}_{A,n}^\alpha(x)= r_{A,n-1}^{\alpha_0}(x)r_{A,n-2}^{\alpha_1}(x)\cdots r_{A,n-d-1}^{\alpha_d}(x).\]
We provide some assumptions which allow us to provide the expression for the resultant \(\Res(r_{A,n},r_{A,n-1})\), for \(n\geq d+1\). According to our best knowledge, our result is the first one which allows to compute the resultants for a broad class of polynomial sequences given by a non-linear recurrence relation of arbitrary (but fixed) order.

Let us describe the content of the paper in some details. In Section \ref{sect2}, we discuss the basic properties satisfied by the resultants. Section \ref{sect3} is dedicated to the main result of the paper. We give the sufficient assumptions under which we will be able to obtain the formula for the resultant \(R_n=\Res(r_{A,n},r_{A,n-1})\), where the  sequence \( (r_{A,n}(x))_{n\in \mathbb{N}}\) is given by the recurrence (\ref{recurrence}). Lastly, we present the cases for which our theorem is equivalent to Schur's formula and the work presented by Ulas in \cite{ulas2021generalization}.

\section{Resultants and their basic properties} \label{sect2}

Let \(F, G\) be polynomials in one variable over a field \(K\) such that
\begin{align*}
F(x)&=a_nx^n +a_{n-1}x^{n-1}+ \ldots + a_1 x +a_0,\\
G(x) &= b_m x^m +b_{m-1}x^{m-1} +\ldots +b_1x+b_0,
\end{align*}
where \(a_nb_m \neq 0\). The resultant of the polynomials \(F, G\) is defined as  
\[\Res(F,G)=a_n^mb_m^n\prod_{i=1}^n \prod_{j=1}^m (\alpha_i-\beta_j),\]
where \(\alpha_1, \ldots,\alpha_n\) and \(\beta_1, \ldots, \beta_m\) are the roots of \(F\) and \(G\) in a fixed algebraic closure of \(K\), respectively. Equivalently, we can define the resultant of \(F, G\) as the determinant of the Sylvester matrix of dimension \((n+m)\times (n+m)\)
\[\Syl(F,G)=\begin{bmatrix}
a_n &a_{n-1}& &\cdots & a_1 & a_0 & & &\\
& a_n &a_{n-1}& & \cdots &a_1 & a_0 & &\\
&&\ddots &\ddots& &&\ddots&\ddots &\\
&&&a_n&a_{n-1}&&\cdots &a_1&a_0\\
b_m & b_{m-1}& &\cdots &b_1& b_0 & & &\\
& b_m & b_{m-1}& & \cdots &b_1 & b_0 & &\\
&&\ddots & \ddots& &&\ddots&\ddots &\\
&&&b_m& b_{m-1}&&\cdots &b_1&b_0\\
\end{bmatrix}.\]
More precisely, if \(\Syl(F,G)= [c_{i,j}]_{1\leq i,j\leq n+m}\), then 
\begin{align*}
c_{i,j}=a_{n-j+i}, \quad \text{for} \quad 1\leq i \leq m, \\
c_{m+i,j}=b_{m-j+i}, \quad \text{for} \quad 1\leq i \leq n,
\end{align*}
where \(a_i = 0\) for \(i \notin \{0,\ldots,n\}\) and \(b_j = 0\) for \(j \notin \{0,\ldots,m\}\). 

We now recall some properties of resultants that will be necessary later in this paper. For polynomials \(F,G\) defined above and \(H \in K[x]\), we have that
\begin{align}
\Res(F,0)&=0,\notag \\
\Res(F,G)&=(-1)^{nm}\Res(G,F),\label{symmetry}\\
\Res(F,GH)&=\Res(F,G)\Res(F,H),\label{multiplicativity}\\
\Res(F,G)&= a_n^m \prod_{i=1}^n G(\alpha_i)= (-1)^{nm}b_m^n\prod_{j=1}^m F(\beta_j),\label{zeros}
\end{align}
and if \(F(x)=a_0\) is a nonzero constant polynomial, then
\begin{equation}\label{constant}
\Res(F,G)=\Res(a_0,G)=\Res(G,a_0)=a_0^m.
\end{equation}
The proofs of the above properties can be found in \cite[Section 3.6]{mignotte2012mathematics}. Finally, we present the last property for polynomials \(F\) and \(G\) satysfying the equality \(F(x)=q(x)G(x)+r(x)\),  which will be crucial in the proof of our main result (Theorem \ref{main}).
\begin{lem}\label{lemma}
Let \(F,G \in K[x]\) be polynomials of the following form:
\begin{align*}
F(x)&=a_nx^n +a_{n-1}x^{n-1}+ \ldots + a_1 x +a_0,\\
G(x) &= b_m x^m +b_{m-1}x^{m-1} +\ldots +b_1x+b_0,
\end{align*}
where \(\deg F=n\geq m=\deg G\), and suppose that \(F(x)=q(x)G(x)+r(x)\) for certain \(q,r \in K[x]\), \(\deg r=k<m\). Then the resultant of \(G,F\) is given by the formula
\begin{equation}\label{division}
\Res(G,F)= b_m^{n-k}\Res(G,r).
\end{equation}
\end{lem}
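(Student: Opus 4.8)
The plan is to use the product-over-roots formula for the resultant, equation~(\ref{zeros}), together with the hypothesis $F(x)=q(x)G(x)+r(x)$. The key observation is that when we evaluate $F$ at a root $\beta_j$ of $G$, the term $q(\beta_j)G(\beta_j)$ vanishes, so $F(\beta_j)=r(\beta_j)$. This is exactly the leverage the decomposition $F=qG+r$ gives us, and it immediately ties the values of $F$ on the roots of $G$ to the values of $r$ on those same roots.

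\medskip

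\textbf{Key steps.} First I would apply~(\ref{zeros}) in the form that extracts the leading coefficient of $G$. Since $\deg G=m$ with leading coefficient $b_m$ and $\deg F=n$, write
\[
\Res(G,F)=b_m^{\,n}\prod_{j=1}^{m}F(\beta_j),
\]
where $\beta_1,\ldots,\beta_m$ are the roots of $G$ in a fixed algebraic closure. Second, substitute $F(\beta_j)=q(\beta_j)G(\beta_j)+r(\beta_j)=r(\beta_j)$, using $G(\beta_j)=0$, to obtain
\[
\Res(G,F)=b_m^{\,n}\prod_{j=1}^{m}r(\beta_j).
\]
Third, I would recognize the remaining product as essentially a resultant of $G$ with $r$: by~(\ref{zeros}) applied to the pair $(G,r)$, where now $\deg r=k<m$, we have $\Res(G,r)=b_m^{\,k}\prod_{j=1}^{m}r(\beta_j)$, so that $\prod_{j=1}^{m}r(\beta_j)=b_m^{-k}\Res(G,r)$. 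Substituting this back gives $\Res(G,F)=b_m^{\,n}\cdot b_m^{-k}\Res(G,r)=b_m^{\,n-k}\Res(G,r)$, which is the claimed formula~(\ref{division}).

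\medskip

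\textbf{Main obstacle.} The computation itself is short, so the delicate point is bookkeeping with the exponents on $b_m$ and making sure the product formula~(\ref{zeros}) is applied with the correct ``degree'' in each exponent. In particular one must track that the exponent $n$ in the first application comes from $\deg F=n$, while the exponent $k$ in the application to $(G,r)$ comes from $\deg r=k$, so the degree of the second argument is what appears in the power of the leading coefficient of the first argument. A subtlety worth flagging is the case $r\equiv 0$: then $F=qG$, the product $\prod_j r(\beta_j)$ is empty of meaning and $\Res(G,F)=0$, which is consistent with the convention $\Res(F,0)=0$; the stated hypothesis $\deg r=k<m$ implicitly handles the nonzero case, and the degenerate case can either be excluded or checked separately against the convention. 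Care with the ordering of arguments (the paper states the result for $\Res(G,F)$, matching the orientation in which~(\ref{zeros}) is most directly applied) avoids an extraneous sign factor from~(\ref{symmetry}).
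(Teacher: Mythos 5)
Your proof is correct. The paper itself gives no proof of this lemma (it is stated as a known property, with references to the literature), so there is nothing to compare against; your argument via the evaluation formula (\ref{zeros}) — namely $\Res(G,F)=b_m^{n}\prod_j F(\beta_j)$, the substitution $F(\beta_j)=r(\beta_j)$ at the roots of $G$, and a second application of (\ref{zeros}) to $\Res(G,r)$ to convert $\prod_j r(\beta_j)$ into $b_m^{-k}\Res(G,r)$ — is the standard proof, and your bookkeeping of the exponents ($n$ from $\deg F$, $k$ from $\deg r$) is exactly right. Your remark on the degenerate case $r\equiv 0$ is a sensible precaution, correctly resolved by the convention $\Res(G,0)=0$, and is anyway excluded by the hypothesis $\deg r=k$.
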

More on the properties of resultants can be found in \cite{gelfand2008discriminants,mignotte2012mathematics,pohst1997algorithmic,prasolov2004polynomials}.

\section{Main result}\label{sect3}
The goal of this section is to provide a generalization of Ulas' theorem \cite{ulas2021generalization}. To state our results we first need to introduce some notation. We define the set 
\[\mathcal{A}=\{(i_0, i_1,\ldots, i_{d-1}, i_{d},k,l,m)\in \mathbb{N}^{d+4}: i_d\geq i_{d-1} \geq \ldots \geq i_0, \; k\geq l, m\neq 0\}.\]
For \(A \in \mathcal{A}\) we consider the polynomials
\[ r_{A,0}(x)=\sum_{s=0}^{i_0} p_{s,0}x^s, \, r_{A,1}(x)= \sum_{s=0}^{i_1} p_{s,1}x^s, \ldots, \, r_{A,d}(x)=\sum_{s=0}^{i_d} p_{s,d}x^s,\]
and the recurrence
\[r_{A,n}(x)=g_{n}(x)r_{A,n-1}^m(x) +\sum_{|\alpha|<m} t_{\alpha,n}(x)\textbf{r}_{A,n}^\alpha(x)r_{A,n-1}(x)+v_{n}x^lr_{A,n-2}^m(x),\]
where \(\alpha = (\alpha_0,\alpha_1, \ldots, \alpha_d) \in \mathbb{N}^{d+1}\), \(|\alpha|=\alpha_0+\ldots+\alpha_d\), and
\begin{align*}
\textbf{r}_{A,n}^\alpha(x)&=r_{A,n-1}^{\alpha_0}(x)r_{A,n-2}^{\alpha_1}(x)\cdots r_{A,n-d-1}^{\alpha_d}(x),\\
g_{n}(x) &= \sum_{s=0}^k a_{s,n} x^s,
\end{align*}
and \(v_n \in K\). We assume that
\begin{align}
t_{\alpha,n}(0)&=0,\notag \\
\deg t_{\alpha,n}&<\deg g_{n}, \notag\\
a_{k,n}\prod_{s=0}^d p_{i_s,s}&\neq 0 \quad \text{ for } n \in \mathbb{N}.\label{eq1}
\end{align} 
Moreover, if \(i_d=i_{d-1}\) and \(k=l\), then
\begin{equation}\label{eq2}
a_{k,d+1}p_{i_d,d}^m +v_{d+1}p_{i_{d-1},d-1}^m \neq 0.
\end{equation}
The above assumptions allow us to give formulas for the degree of the polynomial \(r_{A,n}\), its leading term, the value \(r_{A,n}(0)\), and lastly, the resultant \(\Res(r_{n},r_{n-1})\). Those expressions depend only on the choice of the element \(A \in \mathcal{A}\), the leading and constant terms of the polynomials \(r_{d}\), \(r_{d-1}\), \(g_n\) and the coefficients \(v_n\) (that are all fixed). We prove the following:

\begin{thm}\label{main}
Under the above conditions, the resultant \(R_n\) of the polynomials \(r_{A,n}, r_{A,n-1}\) is given by the formula
\[R_n = (-1)^{\sum_{s=d+1}^{n}m^{n-s}e_A(s)}R_d^{m^{n-d}}\prod_{s=d+1}^{n} \left(L_{s-1}^{\gamma_A(s)}v_s^{\deg r_{s-1}}C_{s-1}^l \right)^{m^{n-s}},\] 
where
\begin{align*} 
e_A(n)&=\deg(r_{A,n})\cdot \deg(r_{A,n-1}),\\
\gamma_A(n)&=\deg(r_{A,n})-\deg(v_nx^lr_{A,n-2})\\
&=\begin{cases}
k-l+m(i_d-i_{d-1}) &\text{ for } n=d+1,\\
m^{n-d-1}(k+i_d(m-1))+k-l &\text{ for } n\geq d+2,
\end{cases}\\
C_n&= \begin{cases}
1 &\text{ for } l=0,\\
\displaystyle p_{0,d}^{m^{n-d}}\prod_{s=1}^{n-d-1}a_{0,d+s}^{m^{n-d-s}} &\text{ for } l>0,
\end{cases}
\end{align*}
and \[L_n=\begin{cases}
\displaystyle \left(a_{k,d+1}p_{i_d,d}^m +v_{d+1}p_{i_{d-1},d-1}^m\right)^{m^{n-d-1}} \prod_{s=2}^{n-d} a_{k,d+s}^{m^{n-d-s}} &\text{for } i_d=i_{d-1}, k=l,\\
\displaystyle p_{i_d,d}^{m^{n-d}} \prod_{s=1}^{n-d} a_{k,d+s}^{m^{n-d-s}} &\text{otherwise}.
\end{cases}\] 
Moreover, the degree of \(r_{A,n}\) is equal to
\[\deg(r_{A,n}) = k \sum_{s=0}^{n-d-1}m^s +i_dm^{n-d} \, \text{ for } n \geq d+1.\]
\end{thm}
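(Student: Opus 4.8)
The plan is to run an induction on $n\ge d+1$ that first fixes the three auxiliary quantities entering the formula — the degree $\deg r_{A,n}$, the leading coefficient $L_n$, and the constant term $r_{A,n}(0)$ recorded via $C_n$ — and then to convert the recurrence into a first-order recurrence for $R_n$ that I solve by telescoping. For the degree, the term $g_nr_{A,n-1}^m$ has degree $k+m\deg r_{A,n-1}$, while each summand of the middle block equals $t_{\alpha,n}\,r_{A,n-1}^{\alpha_0+1}r_{A,n-2}^{\alpha_1}\cdots r_{A,n-d-1}^{\alpha_d}$ with total exponent $|\alpha|+1\le m$; since the degrees $\deg r_{A,j}$ are non-decreasing in $j$ and $\deg t_{\alpha,n}<\deg g_n=k$, every such summand has degree strictly below $k+m\deg r_{A,n-1}$, and because $k\ge l$ the tail $v_nx^lr_{A,n-2}^m$ is dominated as well. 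Hence $\deg r_{A,n}=k+m\deg r_{A,n-1}$, which unwinds to the stated closed form, and the leading coefficient satisfies $L_n=a_{k,n}L_{n-1}^m$. The single exceptional case is $n=d+1$ with $i_d=i_{d-1}$, $k=l$, where $g_{d+1}r_{A,d}^m$ and $v_{d+1}x^lr_{A,d-1}^m$ have equal degree; assumption (\ref{eq2}) is exactly the non-vanishing of the combined leading coefficient there, which is why the definition of $L_n$ splits into two cases. Finally, evaluating the recurrence at $x=0$ and using $t_{\alpha,n}(0)=0$ annihilates the whole middle block, so for $l>0$ one gets $r_{A,n}(0)=a_{0,n}r_{A,n-1}(0)^m$ (while $l=0$ makes the constant term irrelevant, $C_n=1$); induction then produces the closed form packaged in $C_n$.

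For the resultant I would proceed as follows. By the symmetry (\ref{symmetry}), $R_n=(-1)^{e_A(n)}\Res(r_{A,n-1},r_{A,n})$ with $e_A(n)=\deg r_{A,n}\deg r_{A,n-1}$. The key point is that at a root $\beta$ of $r_{A,n-1}$ the recurrence collapses: the term $g_nr_{A,n-1}^m$ and every summand of the middle block carry a factor $r_{A,n-1}(\beta)=0$, so $r_{A,n}(\beta)=v_n\beta^lr_{A,n-2}(\beta)^m$. Substituting this into the product form (\ref{zeros}) gives
\[\Res(r_{A,n-1},r_{A,n})=L_{n-1}^{\deg r_{A,n}}\prod_{\beta}r_{A,n}(\beta)=L_{n-1}^{\deg r_{A,n}}\,v_n^{\deg r_{A,n-1}}\Big(\prod_{\beta}\beta\Big)^{l}\Big(\prod_{\beta}r_{A,n-2}(\beta)\Big)^{m},\]
the product running over the roots $\beta$ of $r_{A,n-1}$. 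Using Vieta in the form $\prod_\beta\beta=(-1)^{\deg r_{A,n-1}}r_{A,n-1}(0)/L_{n-1}$ and, once more by (\ref{zeros}), $\prod_\beta r_{A,n-2}(\beta)=R_{n-1}/L_{n-1}^{\deg r_{A,n-2}}$, and then collecting the powers of $L_{n-1}$ — whose exponent is $\deg r_{A,n}-l-m\deg r_{A,n-2}=\gamma_A(n)$ — I obtain a first-order recurrence of the form
\[R_n=\varepsilon_n\,L_{n-1}^{\gamma_A(n)}\,v_n^{\deg r_{A,n-1}}\,r_{A,n-1}(0)^{\,l}\,R_{n-1}^{\,m},\]
with $\varepsilon_n=\pm1$ an explicit sign. (Equivalently one could invoke Lemma \ref{lemma} when $k>l$, but the root computation above is what makes the step legitimate also in the boundary case $k=l$, where $v_nx^lr_{A,n-2}^m$ is no longer a remainder of degree below $\deg r_{A,n-1}$.)

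It remains to solve $R_n=M_nR_{n-1}^m$. Telescoping gives $R_n=R_d^{m^{n-d}}\prod_{s=d+1}^{n}M_s^{m^{n-s}}$, and distributing the exponent $m^{n-s}$ across the factors of $M_s=\varepsilon_s L_{s-1}^{\gamma_A(s)}v_s^{\deg r_{A,s-1}}r_{A,s-1}(0)^{l}$ reproduces the product in the statement, with $R_d^{m^{n-d}}$ as the base term. The step I expect to be the main obstacle is the sign: the symmetry contributes $(-1)^{e_A(s)}$ at each level and the factor $\big(\prod_\beta\beta\big)^l$ contributes a further $(-1)^{l\deg r_{A,s-1}}$, and these must be carried through the $m^{n-s}$-fold telescoping and matched against the single exponent $\sum_{s=d+1}^{n}m^{n-s}e_A(s)$ in the theorem; verifying that the parities combine as claimed — and absorbing the Vieta sign, together with the constant-term data, into the quantity $C_n$ — is the one genuinely delicate bookkeeping point, the rest being routine substitution of the closed forms established in the first paragraph.
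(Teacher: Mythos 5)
Your overall architecture --- establish $\deg r_{A,n}$, $L_n$ and $C_n$ by induction, convert the recurrence into a first-order relation $R_n=M_nR_{n-1}^m$, and telescope --- is exactly the paper's. The one structural difference is the middle step: the paper invokes Lemma \ref{lemma} to pass from $\Res(r_{n-1},r_n)$ to $L_{n-1}^{\gamma_A(n)}\Res(r_{n-1},v_nx^lr_{n-2}^m)$ and then factors by (\ref{multiplicativity}), whereas you evaluate $r_n$ at the roots of $r_{n-1}$ directly via (\ref{zeros}). These are essentially the same computation, but your version is the more robust one: as you observe, when $k=l$ (and already at the base step $n=d+1$, where $\deg(v_{d+1}x^lr_{d-1}^m)=l+mi_{d-1}$ need not be smaller than $i_d$) the hypothesis $\deg r<\deg G$ of Lemma \ref{lemma} fails, so the root computation is what legitimizes the step in exactly the boundary cases the paper glosses over.

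The point you defer, however --- ``verifying that the parities combine as claimed'' --- is not bookkeeping that happens to work out; it is where your derivation and the printed formula part ways. Your Vieta step correctly produces a factor $(-1)^{l\deg r_{A,n-1}}$ at each level (equivalently, $\Res(r_{n-1},x)^l=(-1)^{l\deg r_{n-1}}r_{n-1}(0)^l$, not $r_{n-1}(0)^l$), and after telescoping this contributes $(-1)^{l\sum_{s=d+1}^{n}m^{n-s}\deg r_{A,s-1}}$, which is absent from the theorem's sign $(-1)^{\sum_{s=d+1}^{n}m^{n-s}e_A(s)}$ and is not identically $+1$ (take $l$ odd and $\deg r_{A,s-1}$ odd). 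The paper's own proof escapes this only by writing $\Res(r_{n-1},x)^l$ as $(r_{n-1}(0))^l$, i.e., by silently dropping the same sign. So as a proof of the statement as literally stated, your argument cannot be completed: carried out honestly it yields the formula with the extra factor $(-1)^{l\sum m^{n-s}\deg r_{A,s-1}}$. You need either to show this factor is trivial in the situation at hand (it is not, in general) or to record it explicitly and note the discrepancy; leaving it as a ``delicate point to be checked'' is the one genuine gap in your write-up --- and, as it happens, it points at a sign that the published proof also fails to account for.
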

\begin{proof}
Because \(A\in \mathcal{A}\) is fixed, to simplify the notation we omit the index \(A\) in \(r_{A,n}\) and \(\textbf{r}_{A,n}^\alpha\), i.e., we will write \(r_{n}\) instead of \(r_{A,n}\) and \(\textbf{r}_{n}^\alpha\) in place of \(\textbf{r}_{A,n}^\alpha\). Under the assumptions on \(i_0, \ldots, i_d ,k,l\), the degrees of the polynomials \(t_{\alpha,n}\), and the conditions (\ref{eq1}), (\ref{eq2}), we can calculate the degree of the polynomial \(r_n\), its leading term \(L_n\), and the value of \(C_n \mathrel{\mathop:}= r_{n}(0)\). Let us consider the first step of the recurrence:
\begin{align*}
r_{d+1}(x)&=g_{d+1}(x)r_{d}^m(x)+\sum_{|\alpha|<m}t_{d+1}(x)\textbf{r}_{d+1}^\alpha(x)r_{d}(x)+v_{d+1}x^l r_{d-1}^m(x)\\
&=r_d(x)\left(g_{d+1}(x)r_{d}^{m-1}(x) + \sum_{|\alpha|<m}t_{\alpha,d}(x)\textbf{r}_{d+1}^\alpha(x)\right) + v_{d+1}x^l r_{d-1}^m(x).
\end{align*}
If \(i_d >i_{d-1}\) and \(k \geq l\), the term \(v_{d+1}x^l r_{d-1}^m(x)\) does not affect \(\deg(r_{d+1})\). It is also true in the case when \( i_d=i_{d-1}\) and \(k>l\). Therefore,
\[\deg(r_{d+1}) = i_d + \deg \left(g_{d+1}r_{d}^{m-1}+ \sum_{|\alpha|<m} t_{\alpha,d+1}\textbf{r}_{d+1}^\alpha \right).\]
Due to our assumptions, for each \(\alpha\) satysfying \(|\alpha|<m\), the degree of the polynomial \( g_{d+1}r_{d}^{m-1}\) is bigger than the degrees of the terms \(t_{\alpha, d+1} \textbf{r}_{d+1}^\alpha\). More precisely, we have the equality \(\deg (g_{d+1}r_{d}^{m-1})= k+(m-1)i_d\). Hence,
\[\deg(r_{d+1})=k+i_d m.\]

In the case where \(i_d= i_{d-1}\) and \(k=l\), by the assumption (\ref{eq2}) we obtain the same result. Analogously, we can write
\[r_{d+2}(x)=r_{d+1}(x)\left( g_{d+2}(x)r_{d+1}^{m-1}+\sum_{|\alpha|<m} t_{\alpha, d+2}(x) \textbf{r}_{d+2}^\alpha(x)\right) +v_{d+2}x^l r_{d}^m(x),\]
and the degree of \(g_{d+2} r_{d+1}^m\) is greater than the degrees of the other terms in the sum. This implies that
\[\deg(r_{d+2}) = k+ m(k+i_dm)= mk +k +i_dm^2,\]
and by induction on \(n\) we get
\[\deg(r_{d+n})=k\sum_{s=0}^{n-1} m^{s} + i_d m^{n} \;\text{ for } n\geq 1. \]
Equivalently, we can write
\[\deg(r_{n}) =k\sum_{s=0}^{n-d-1} m^{s} +i_d m^{n-d}\; \text{ for } n \geq d+1.\] 

To compute the leading term \(L_n\) of \(r_n\), as before, we consider two cases. The first one is \( (i_d>i_{d-1})\vee (i_d=i_{d-1} \wedge k>l) \). If \(i_d > i_{d-1}\), then using previous calculations, we know that the degree of the polynomial \(r_{d+1}\) is determined by the term \(g_{d+1}r_{d}^m\). Therefore, the leading term \(L_{d+1}\) is equal to \( a_{k,d+1}p_{i_d,d}^m\). Similarly, \(L_{d+2} = a_{k,d+2}\left(a_{k,d+1}p_{i_d,d}^m \right)^m\), and by an easy induction
\[L_{d+n}= p_{i_d,d}^{m^{n}} \prod_{s=1}^{n} a_{k,d+s}^{m^{n-s}} \;\text{ for } n \geq 1.\]
Equivalently,
\[L_{n} = p_{i_d,d}^{m^{n-d}} \prod_{s=1}^{n-d} a_{k,d+s}^{m^{n-d-s}} \;\text{ for } n \geq d+1.\]
For \(i_d=i_{d-1}\) and \(k=l\) we apply the same reasoning, and we get that
\begin{align*}
L_{d+1} &= a_{k,d+1}p_{i_d,d}^m +v_{d+1}p_{i_{d-1},d-1}^m,\\
L_{d+2} &= a_{k,d+2}\left( a_{k,d+1}p_{i_d,d}^m +v_{d+1}p_{i_{d-1},d-1}^m\right)^m,
\end{align*}
and in general 
\[L_{d+n} = \left(a_{k,d+1}p_{i_d,d}^m +v_{d+1}p_{i_{d-1},d-1}^m\right)^{m^{n-1}} \prod_{s=2}^{n} a_{k,d+s}^{m^{n-s}} \;\text{ for } n \geq 1.\]
Therefore,
\[L_{n} = \left(a_{k,d+1}p_{i_d,d}^m +v_{d+1}p_{i_{d-1},d-1}^m\right)^{m^{n-d-1}} \prod_{s=2}^{n-d} a_{k,d+s}^{m^{n-d-s}} \;\text{ for } n \geq d+1.\]

In order to find the value of the constant term \(C_n= r_n(0)\) we will consider two cases: \(l>0\) and \(l=0\). If \(l>0\), then from the assumption that \(t_{\alpha,n}(0)=0\) for all \( n \geq d+1\) and induction on \(n\) we get
\[C_{d+n}=p_{0,d}^{m^n}\prod_{s=1}^{n-1}a_{0,d+s}^{m^{n-s}} \;\text{ for } n \geq 1.\]
This equality can be equivalently written as
\[C_{n}=p_{0,d}^{m^{n-d}}\prod_{s=1}^{n-d-1}a_{0,d+s}^{m^{n-d-s}} \;\text{ for } n \geq d+1.\]
When \(l=0\), the values of \(C_n\) satisfy the recurrence:
\[C_n=a_{0,n}C_{n-1}^m +v_{n}C_{n-2}^m\]
for \(n\geq d+2\), where \(C_d=p_{0,d}\) and \(C_{d+1}= a_{0,d+1}p_{0,d}^m+v_{d+1}p_{0,d-1}^m\). We are not able to show the exact form of the term \(C_n\) but it is not necessary to prove the theorem.

Now, let \( R_n\) denote the resultant of the polynomials \(r_n\), \(r_{n-1}\), i.e., \(R_n = \Res(r_n, r_{n-1})\). For \(n=d+1\) we have the following chain of equalities:
\begin{align*}
R_{d+1}&\stackrel{(\ref{symmetry})}{=}(-1)^{\deg(r_{d+1})\deg(r_{d})}\Res(r_{d},r_{d+1})\\
&=(-1)^{i_d(k+i_dm)}\Res\left(r_{d}, r_{d}\left(g_{d+1}r_{d}^{m-1}+\sum_{|\alpha|<m}t_{\alpha,d}\textbf{r}_{d+1}^\alpha\right)+v_{d+1}x^l r_{d-1}^m\right)\\
&\stackrel{(\ref{division})}{=}(-1)^{i_d(k+i_dm)}p_{i_d,d}^{\deg(r_{d+1})-\deg(v_{d+1}x^l r_{d-1}^m)}\Res(r_{d},v_{d+1}x^l r_{d-1}^m)\\
&=(-1)^{i_d(k+i_dm)}p_{i_d,d}^{k+i_d m-(l+i_{d-1} m)}\Res(r_{d},v_{d+1}x^l r_{d-1}^m)\\
&\stackrel{(\ref{multiplicativity})}{=}(-1)^{i_d(k+i_dm)}p_{i_d,d}^{k-l+m(i_d-i_{d-1})}\Res(r_{d},v_{d+1})\Res(r_{d},x^l)\Res(r_{d},r_{d-1}^m)\\
&\kern-0.6em\stackrel{(\ref{constant}), (\ref{zeros})}{=}(-1)^{i_d(k+i_dm)}p_{i_d,d}^{k-l+m(i_d-i_{d-1})} v_{d+1}^{\deg(r_{d})}C_d^{l}\Res(r_{d},r_{d-1})^m\\
&=(-1)^{i_d(k+i_dm)}p_{i_d,d}^{k-l+m(i_d-i_{d-1})} v_{d+1}^{i_d}p_{0,d}^l\Res(r_{d},r_{d-1})^m,
\end{align*}
where the numbers above the equals signs correspond to the numbers of the resultant properties in Section \ref{sect2}. In general, the form of the considered recurrence allows us to use the Lemma \ref{lemma}. More precisely,
\[\Res(r_{n-1}, r_{n})= L_{n-1}^{\deg(r_{n})-\deg(v_{n}x^l r_{n-2}^m)}\Res(r_{n-1}, v_n x^l r_{n-2}^m).\]
Before we calculate the expression for \(R_n\), we need to find the value of \( \deg(r_n)-\deg(v_nx^lr_{n-2}^m)\) for \(n \geq d+2\). For \( n =d+2\) we have 
\begin{align*}
\gamma_A(d+2)&= \gamma(d+2) = \deg(r_{d+2})-\deg(v_n x^l r_{d}^m)\\
&= k +km+i_dm^2-(l+i_dm)=m(i_d(m-1)+k)+k-l,
\end{align*} 
and for \(n>d+2\) we obtain
\begin{align*}
\gamma_A(n)&=\gamma(n)=\deg(r_{n})- \deg (v_{n}x^l r_{n-2}^m)\\
&= k\sum_{s=0}^{n-d-1}m^s+i_dm^{n-d} - \left(l +m\left(k\sum_{s=0}^{n-d-3}m^s+i_dm^{n-d-2} \right)\right)\\
&=m^{n-d-1}(k+i_d(m-1))+k-l.
\end{align*}
Let \(e_A(n)=e(n)=\deg(r_{n}) \cdot\deg( r_{n-1})\), \(n\geq 1\). 
Then 
\begin{align*}
R_n&=\Res(r_{n}, r_{n-1}) \stackrel{(\ref{symmetry})}{=} (-1)^{e(n)}\Res(r_{n-1}, r_{n}) \\
&\stackrel{(\ref{division})}{=}  (-1)^{e(n)}L_{n-1}^{\gamma(n)}\Res(r_{n-1},v_{n}x^l r_{n-2}^m)\\
&\stackrel{(\ref{multiplicativity})}{=} (-1)^{e(n)}L_{n-1}^{\gamma(n)} \Res(r_{n-1}, v_n)\Res(r_{n-1},x)^l \Res(r_{n-1}, r_{n-2})^m\\
&\kern-0.6em\stackrel{(\ref{constant}),(\ref{zeros})}{=} (-1)^{e(n)}L_{n-1}^{\gamma(n)}v_n^{\deg r_{n-1}} (r_{n-1}(0))^l \Res(r_{n-1}, r_{n-2})^m\\
&=(-1)^{e(n)}L_{n-1}^{\gamma(n)}v_n^{\deg r_{n-1}} C_{n-1}^l R_{n-1}^m,
\end{align*}
where the numbers above the equals signs correspond to the numbers of the resultant properties in Section \ref{sect2}. With the convention that \(0^0=1\), the above equality is also valid for \(l=0\). By repeatedly applying the obtained relation we get
\[R_n = (-1)^{\sum_{s=d+1}^{n}m^{n-s}e(s)}\Res(r_{d},r_{d-1})^{m^{n-d}}\prod_{s=d+1}^{n} \left(L_{s-1}^{\gamma(s)}v_s^{\deg r_{s-1}}C_{s-1}^l \right)^{m^{n-s}}.\] 
\end{proof}
The above theorem can be translated to the case of the recurrence relation of order \(2\). More precisely, for \(d=1\) and an element \(A\) from the set 
\[\mathcal{A}= \{(i,j,k,l,m) \in \mathbb{N}^{5}: j\geq i, k\geq l, m\neq 0\}\]
we consider the sequence of the polynomials \((r_{A,n}(x))_{n\in\mathbb{N}}\) defined by the recurrence relation 
\[r_{A,n}(x)= \sum_{s=0}^m t_{s,n}(x)r_{A,n-1}^{m-s}(x)r_{A,n-2}^s(x)\]
with the initial conditions
\[r_{A,0}(x)= \sum_{s=0}^i p_sx^s, \quad r_{A,1}(x)=\sum_{s=0}^j q_s x^s,\]
where for every \(n \geq 2\)
\begin{align*}
t_{0,n}(x)&=a_{k,n}x^k+a_{k-1,n}x^{k-1}+\ldots+ a_{1,n}x+a_{0,n},\\
t_{m,n}(x)&=v_nx^l,\\
t_{s,n}(0)&=0\; \text{and} \; \deg t_{s,n} < \deg t_{0,n} \text{ for } s \in \{1,2,\ldots, m-1\}.
\end{align*}
Moreover, we assume that \(p_iq_ja_{k,n}\neq 0\) for every \(n \in \mathbb{N}_{\geq 2}\), and \(a_{k,2}p_j^m+v_2q_j^m\neq 0\) when \(i=j\) and \(k=l\). Then we have the following result:
\begin{cor}\label{order2}
Under the above assumptions, for \(n \in \mathbb{N}_{\geq 2}\) the resultant \(R_n\) of the polynomials \(r_{A,n}\), \(r_{A,n-1}\) is given by the formula:
\[R_n= (-1)^{\sum_{s=2}^n m^{n-s}e_A(s)}\prod_{s=2}^n \left(L_{s-1}^{\gamma_A(s)}v_s^{\deg r_{A,s-1}} C_{s-1}^l \right)^{m^{n-s}}R_1^{m^{n-1}},\]
where 
\begin{align*}
e_A(n)&=\deg (r_{A,n}) \cdot \deg (r_{A,n-1}),\\
\gamma_A(n)&=\deg (r_{A,n})-\deg (t_{m,n} r_{A,n-2}^m)\\
&= \begin{cases}
k-l +m(j-i) &\text{ for } n=2,\\
m^{n-2}(k+j(m-1))+k-l &\text{ for } n\geq 3,
\end{cases}\\
C_n &= \begin{cases}
1 &\text{ for } l=0,\\
q_0^{m^{n-1}}\prod_{s=2}^na_{0,n}^{m^{n-s}} & \text{ for } l>0,
\end{cases}\\
L_n &=
\begin{cases}
(a_{k,2}q_j^m +v_2p_j^m)^{m^{n-2}} \prod_{s=2}^n a_{k,s}^{m^{n-s}} &\text{\emph{for}} \; i=j \text{ and } k=l,\\
q_j^{m^{n-1}} \prod_{s=2}^n a_{k,s}^{m^{n-s}} &\text{ otherwise}.
\end{cases}
\end{align*}  
\end{cor}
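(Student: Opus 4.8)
The plan is to obtain Corollary \ref{order2} as the special case \(d=1\) of Theorem \ref{main}; once the two notational systems are aligned the statement follows by substitution, with no genuinely new computation. First I would fix the dictionary. Putting \(d=1\) in Theorem \ref{main}, the index tuple \((i_0,i_1,k,l,m)\) becomes \((i,j,k,l,m)\), so that \(i_0=i\) and \(i_1=j\); the initial data are identified through \(p_{s,0}=p_s\) and \(p_{s,1}=q_s\), whence the leading coefficients are \(p_{i_0,0}=p_i\) and \(p_{i_1,1}=q_j\) and the constant term of \(r_{A,1}\) is \(p_{0,1}=q_0\). On the level of the recurrence I would match the two extreme coefficients, \(g_n=t_{0,n}\) (both equal to \(\sum_{s=0}^k a_{s,n}x^s\)) and \(v_nx^l=t_{m,n}\).

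The single point that really needs checking is that the recurrence \(r_{A,n}=\sum_{s=0}^m t_{s,n}r_{A,n-1}^{m-s}r_{A,n-2}^s\) of the corollary is an instance of the recurrence (\ref{recurrence}) with \(d=1\). For \(d=1\) we have \(\textbf{r}_{A,n}^\alpha=r_{A,n-1}^{\alpha_0}r_{A,n-2}^{\alpha_1}\), so a middle summand \(t_{\alpha,n}\textbf{r}_{A,n}^\alpha r_{A,n-1}\) equals \(t_{\alpha,n}r_{A,n-1}^{\alpha_0+1}r_{A,n-2}^{\alpha_1}\). Taking \(\alpha=(m-1-s,\,s)\) for \(s\in\{1,\dots,m-1\}\) produces exactly the monomial \(r_{A,n-1}^{m-s}r_{A,n-2}^s\), and each such \(\alpha\) satisfies \(|\alpha|=m-1<m\); thus setting \(t_{\alpha,n}=t_{s,n}\) for these \(\alpha\) and \(t_{\alpha,n}=0\) for all remaining \(\alpha\) with \(|\alpha|<m\) recovers the corollary's recurrence. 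Under this identification the hypotheses \(t_{s,n}(0)=0\) and \(\deg t_{s,n}<\deg t_{0,n}\) for \(1\le s\le m-1\) become \(t_{\alpha,n}(0)=0\) and \(\deg t_{\alpha,n}<\deg g_n\), the condition \(p_iq_ja_{k,n}\ne 0\) becomes (\ref{eq1}), and the extra requirement imposed when \(i=j,\ k=l\) becomes exactly (\ref{eq2}). Hence every hypothesis of Theorem \ref{main} is in force.

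It then remains to read off the conclusion. Substituting \(d=1\) into the degree formula and into the closed forms for \(e_A(n)\), \(\gamma_A(n)\), \(C_n\), \(L_n\) and \(R_n\) from Theorem \ref{main}, and applying the dictionary above, I would simplify each finite product by the index shift \(d+s=1+s\), so that a factor such as \(\prod_s a_{k,d+s}^{\,m^{\,n-d-s}}\) is rewritten with running subscript \(1+s\) and matched against the exponents \(m^{\,n-s}\) appearing in the corollary. The two-case structure of \(\gamma_A\) and \(L_n\) transfers verbatim, the split condition \(i_d=i_{d-1},\ k=l\) turning into \(i=j,\ k=l\). I do not expect a genuine obstacle here: the whole argument is bookkeeping. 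The only points demanding care are the reindexing of these products, keeping the powers of \(m\) correctly aligned after the shift, and, in the split case, distinguishing the leading coefficient \(p_i\) of \(r_{A,0}\) from the leading coefficient \(q_j\) of \(r_{A,1}\) so that the base \(a_{k,2}q_j^m+v_2p_i^m\) of \(L_n\) is recorded with the two coefficients in their correct roles.
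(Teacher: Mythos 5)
Your proposal is correct and follows exactly the route the paper intends: Corollary \ref{order2} is stated as the immediate specialization of Theorem \ref{main} to \(d=1\), and your dictionary (in particular the identification of the middle terms via \(\alpha=(m-1-s,s)\) with \(|\alpha|=m-1<m\), and the matching of the hypotheses with (\ref{eq1}) and (\ref{eq2})) is precisely the verification needed. Your closing remark about keeping \(p_i\) and \(q_j\) in their correct roles in the base \(a_{k,2}q_j^m+v_2p_i^m\) is well taken, since the substitution \(d=1\) indeed yields \(p_{i_0,0}=p_i\) and \(p_{i_1,1}=q_j\) there.
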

From the statement of the Corollary \ref{order2} it is easier to see why our result is a generalization of the work \cite{ulas2021generalization} and the theorem attributed to Schur. For \(d=1\), \(m=1\) and arbitrary \(i,j,k,l \in \mathbb{N}\) we get the result presented by Ulas. When \(d=1\) and \(A=(0,1,1,0,1)\), the form of the considered recurrence relation is the same as in the proof of Schur's formula.

\bibliographystyle{acm}
\bibliography{bib}
\end{document}